\theoremstyle{definition}
\newtheorem{definition}{Definition}[section]
\theoremstyle{theorem}
\newtheorem{theorem}{Theorem}[section]
\newtheorem{lemma}[theorem]{Lemma}
\newtheorem{remark}[theorem]{Remark}
\begin{document}

\title{Fractional Calculus of Variations\\ for Double Integrals\thanks{Submitted 07-Sept-2010; 
revised 25-Nov-2010; accepted 07-Feb-2011; for publication in 
\emph{Balkan Journal of Geometers and Its Applications} (BJGA).}}

\author{Tatiana Odzijewicz\\ 
{\tt tatianao@ua.pt}
\and 
Delfim F. M. Torres\\
{\tt delfim@ua.pt}}

\date{Department of Mathematics\\
University of Aveiro\\
3810-193 Aveiro, Portugal}

\maketitle


\vspace*{-0.6cm}

\begin{center}
\emph{Dedicated to Professor Constantin Udri\c ste
on the occasion of his 70th birthday}
\end{center}


\begin{abstract}
We consider fractional isoperimetric problems
of calculus of variations with double integrals
via the recent modified Riemann--Liouville approach.
A necessary optimality condition of Euler--Lagrange type,
in the form of a multitime fractional PDE, is proved,
as well as a sufficient condition
and fractional natural boundary conditions.

\medskip

\noindent {{\bf M.S.C. 2010}:\ } 49K21, 35R11.

\medskip

\noindent {{\bf Keywords}:\ } calculus of variations; fractional calculus;
multitime Euler--Lagrange fractional PDE;
multiple integral cost; modified Riemann--Liouville derivative.

\end{abstract}


\section{Introduction}

The calculus of variations was born in 1697 with the
solution to the brachistochrone problem
(see, \textrm{e.g.}, \cite{Brunt}).
It is a very active research area in the XXI century
(see, \textrm{e.g.}, \cite{MyID152,MyID:175,MyID:141,MyID:171,MyID130}).
Motivated by the study of several natural phenomena in such areas as
aerodynamics, economics, medicine, environmental engineering, and biology,
there has been a recent increase of interest in the study of
problems of the calculus of variations and optimal control
where the cost is a multiple integral functional with several
independent time variables. The reader interested in the area
of multitime calculus of variations and multitime optimal control
is referred to \cite{Marutani,P:Udriste:mt,Udriste:JOTA,Udriste:mtMP,%
MR2539748,Udriste:mt,Udriste:mt2,Udriste:Pop:Pop,MR2110147:mt,Udriste:Tevy}
and references therein.

Fractional calculus, \textrm{i.e.},
the calculus of non-integer order derivatives,
has its origin also in the 1600s.
During three centuries the theory of fractional derivatives
of real or complex order developed as a pure theoretical
field of mathematics, useful only for mathematicians.
In the last few decades, however, fractional differentiation
proved very useful in various fields of applied sciences and engineering:
physics (classic and quantum mechanics, thermodynamics, etc.),
chemistry, biology, economics, engineering,
signal and image processing, and control theory
\cite{MyID179,Hilfer,Kilbas,MyID163,MyID181,podlubny}.

The calculus of variations and the fractional calculus
are connected since the XIX century. Indeed, in 1823
Niels Henrik Abel applied the fractional calculus
in the solution of an integral equation
that arises in the formulation of the tautochrone problem.
This problem, sometimes also called the isochrone problem,
is that of finding the shape of a frictionless wire lying in a vertical
plane such that the time of a bead placed on the wire slides
to the lowest point of the wire in the same time regardless
of where the bead is placed. It turns out that the cycloid
is the isochrone as well as the brachistochrone curve,
solving simultaneously the brachistochrone problem
of the calculus of variations
and Abel's fractional problem \cite{Abel}.
It is however in the XX century that both areas are joined
in a unique research field: the fractional calculus of variations.

The Fractional Calculus of Variations (FCV) was born in 1996-97
with the proof, by Riewe, of the Euler-Lagrange fractional
differential equations \cite{Riewe:1996,Riewe:1997}.
Nowadays, FCV is subject of strong current research -- see, \textrm{e.g.},
\cite{OmPrakashAgrawal,FrTor1,Baleanu,FrIso,FrDI,FrMult,NatFr}.
The first works on FCV were developed using fractional
derivatives in the sense of Riemann--Liouville \cite{OmPrakashAgrawal}.
Later, problems of FCV with Grunwald--Letnikow, Caputo, Riesz and Jumarie fractional operators,
among others, were considered \cite{FrMult,NatFr,NabTor,FrTor2}.
The literature on FCV is now vast. However, most results refer to the single time case.
Results for multitime FCV are scarce, and reduce to those in \cite{FrMult,NabTor:JMP,Udriste:fa}.
Here we develop further the theory of multitime fractional calculus of variations,
by considering fractional isoperimetric problems with two independent time variables.
Previous results on fractional isoperimetric problems are for the single time case only
\cite{FrIso,FrDI}. In our paper we study isoperimetric problems for variational functionals
with double integrals involving fractional partial derivatives.

The paper is organized as follows. In Section~2 we recall some basic definitions
of multidimensional fractional calculus. Our results are stated and proved in Section~3.
The main results of the paper include natural boundary conditions (Theorem~\ref{thm:NatBound})
and a necessary optimality condition (Theorem~\ref{thm:EL}) that becomes sufficient
under appropriate convexity assumptions (Theorem~\ref{thm:Suff}).


\section{Preliminaries}
\label{sec:fdRL}

In this section we fix notations by collecting the
definitions of fractional derivatives and integrals
in the modified Riemann--Liouville sense. For more
information on the subject we refer the reader to
\cite{FrMult,JuFr,JuMod,JuTab,withBasiaRachid}.

\begin{definition}[The Jumarie fractional derivative \cite{JuTab}]
Let $f$ be a continuous function in the interval
$[a,b]$ and $\alpha\in (0,1)$. The operator defined by
\begin{equation}
\label{eq:fracJum}
f^{(\alpha)}(x)=\frac{1}{\Gamma(1-\alpha)}\frac{d}{dx}
\int\limits_a^x(x-t)^{-\alpha}(f(t)-f(a))dt
\end{equation}
is called the Jumarie fractional derivative of order $\alpha$.
\end{definition}
Let us consider continuous functions
$f=f(x_1,\ldots,x_n)$ defined on
$$
R=\prod\limits_{i=1}^n[a_i,b_i]\subset\mathbb{R}^n .
$$
\begin{definition}[The fractional volume integral \cite{FrMult}]
For $\alpha\in (0,1)$ the fractional volume integral
of $f$ over the whole domain $R$ is given by
\begin{equation*}
I_R^\alpha f = \alpha^n\int\limits_{a_1}^{b_1} \ldots
\int\limits_{a_n}^{b_n} f(t_1,\ldots,t_n)(b_1-t_1)^{\alpha-1}
\ldots (b_n-t_n)^{\alpha-1} dt_n \ldots dt_1.
\end{equation*}
\end{definition}

\begin{definition}[Fractional partial derivatives \cite{FrMult}]
\label{dfn:FracPart}
Let $x_i\in [a_i,b_i]$, $i=1,\ldots,n$, and $\alpha\in (0,1)$.
The operator ${_{a_i}D_{x_i}^\alpha}[i]$ defined by
\begin{multline*}
_{a_i}D_{x_i}^\alpha[i]f(x_1,\ldots,x_n)
=\frac{1}{\Gamma(1-\alpha)}\frac{\partial}{\partial x_i}
\int\limits_{a_i}^{x_i}(x_i-t)^{-\alpha}\Bigr[f(x_1,
\ldots,x_{i-1},t,x_{i+1},\ldots,x_n)\\
-f(x_1,\ldots,x_{i-1},a_i,x_{i+1},\ldots,x_n)\Bigr] dt
\end{multline*}
is called the $i$th fractional partial derivative
of order $\alpha$, $i=1,\ldots,n$.
\end{definition}

\begin{remark}
The Jumarie fractional derivative \cite{JuMod,JuTab}
given by \eqref{eq:fracJum} can be obtained by putting
$n=1$ in Definition~\ref{dfn:FracPart}:
\begin{equation*}
_{a}D_{x}^\alpha[1]f(x)=\frac{1}{\Gamma(1-\alpha)}\frac{d}{dx}
\int\limits_a^x(x-t)^{-\alpha}(f(t)-f(a))dt=f^{(\alpha)}(x).
\end{equation*}
\end{remark}

\begin{definition}[The fractional line integral \cite{FrMult}]
Let $R=[a,b]\times[c,d]$. The fractional line integral
on $\partial R$ is defined by
\begin{equation*}
I_{\partial R}^\alpha f
=I_{\partial R}^\alpha[1]f+I_{\partial R}^\alpha[2]f,
\end{equation*}
where
\begin{equation*}
I_{\partial R}^\alpha[1]f=\alpha
\int\limits_a^b\left[f(t,c)-f(t,d)\right](b-t)^{\alpha-1}dt
\end{equation*}
and
\begin{equation*}
I_{\partial R}^\alpha[2]f=\alpha
\int\limits_c^d\left[f(b,t)-f(a,t)\right](d-t)^{\alpha-1}dt.
\end{equation*}
\end{definition}


\section{Main Results}
\label{sec-FrCVDI}

Let us consider functions $u=u(x,y)$.
We assume that the domain of functions $u$ contain
the rectangle $R=[a,b]\times[c,d]$ and are continuous on $R$.
Moreover, functions $u$ under our consideration
are such that the fractional partial derivatives
$_{a}D_{x}^\alpha[1]u$ and $_{c}D_{y}^\alpha[2]u$ are continuous on $R$,
$\alpha\in(0,1)$. We investigate the following
fractional problem of the calculus of variations:
to minimize a given functional
\begin{equation}
\label{eq:Funct}
J[u(\cdot,\cdot)]=\alpha^2\int\limits_a^b\int\limits_c^d
f\left(x,y,u,_{a}D_{x}^\alpha[1]u,_{c}D_{y}^\alpha[2]u\right)
(b-x)^{\alpha-1}(d-y)^{\alpha-1}dy dx
\end{equation}
when subject to an isoperimetric constraint
\begin{equation}
\label{eq:IFunct}
\alpha^2\int\limits_a^b\int\limits_c^d
g\left(x,y,u,_{a}D_{x}^\alpha[1]u,_{c}D_{y}^\alpha[2]u\right)
(b-x)^{\alpha-1}(d-y)^{\alpha-1}dydx=K
\end{equation}
and a boundary condition
\begin{equation}
\label{eq:bound}
\left.u(x,y)\right|_{\partial R}=\psi(x,y).
\end{equation}
We are assuming that $\psi$ is some given function,
$K$ is a constant, and $f$ and $g$ are at least of class of $C^1$.
Moreover, we assume that $\partial_{4} f$ and $\partial_{4} g$
have continuous fractional partial derivatives $_{a}D_{x}^\alpha[1]$;
and $\partial_{5} f$ and $\partial_{5} g$ have continuous fractional
partial derivatives $_{c}D_{y}^\alpha[2]$. Along the work, we denote by
$\partial_{i}f$ and $\partial_{i}g$ the standard partial derivatives
of $f$ and $g$ with respect to their $i$-th argument, $i = 1,\ldots,5$.

\begin{definition}
A continuous function $u=u(x,y)$ that satisfies the given
isoperimetric constraint \eqref{eq:IFunct}
and boundary condition \eqref{eq:bound},
is said to be admissible for problem \eqref{eq:Funct}-\eqref{eq:bound}.
\end{definition}

\begin{remark}
Contrary to the classical setting
of the calculus of variations, where admissible functions
are necessarily differentiable, here we are considering our
variational problem \eqref{eq:Funct}-\eqref{eq:bound}
on the set of continuous curves $u$
(without assuming differentiability of $u$). Indeed,
the modified Riemann--Liouville derivatives have
the advantage of both the standard Riemann--Liouville
and Caputo fractional derivatives: they are defined for arbitrarily
continuous (not necessarily differentiable) functions, like the
standard Riemann--Liouville ones, and the fractional derivative
of a constant is equal to zero, as it happens with the Caputo derivatives.
\end{remark}

\begin{definition}[Local minimizer to \eqref{eq:Funct}-\eqref{eq:bound}]
An admissible function $u = u(x,y)$ is said to be a local minimizer
to problem \eqref{eq:Funct}-\eqref{eq:bound}
if there exists some $\gamma>0$ such that for all admissible functions
$\hat{u}$ with $\left\|\hat{u}-u\right\|_{1,\infty} < \gamma$ one has
$J[\hat{u}]-J[u]\geq 0$, where
\begin{equation*}
\|u\|_{1,\infty}:=\max_{(x,y) \in R}\left|u(x,y)\right|
+\max_{(x,y) \in R}\left|_{a}D_{x}^\alpha[1]u(x,y)\right|
+\max_{(x,y) \in R}\left|_{c}D_{y}^\alpha[2]u(x,y)\right|.
\end{equation*}
\end{definition}

We make use of the following result proved in \cite{FrMult}:

\begin{lemma}[Green's fractional formula \cite{FrMult}]
\label{lemma:Green}
Let $h,k$, and $\eta$ be continuous
functions whose domains contain $R$. Then,
\begin{equation*}
\begin{split}
&\int\limits_a^b\int\limits_c^d\left[h(x,y)_{a}D_{x}^\alpha[1]\eta(x,y)
-k(x,y)_{c}D_{y}^\alpha[2]\eta(x,y)\right](b-x)^{\alpha-1}(d-y)^{\alpha-1}dydx\\
&=-\int\limits_a^b\int\limits_c^d\left[_{a}D_{x}^\alpha[1]h(x,y)
-_{c}D_{y}^\alpha[2]k(x,y)\right]\eta(x,y)(b-x)^{\alpha-1}(d-y)^{\alpha-1}dydx\\
&\quad +\alpha !\left[I_{\partial R}^\alpha[1](h\eta)+I_{\partial R}^\alpha[2](k\eta)\right].
\end{split}
\end{equation*}
\end{lemma}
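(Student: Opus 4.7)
The plan is to reduce the two-dimensional identity to a one-dimensional fractional integration by parts, apply it slice-wise in each coordinate via Fubini, and then recognize the resulting endpoint contributions as the two components of the fractional line integral on $\partial R$.

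The one-dimensional ingredient I would establish first is a fractional IBP of the shape
$$\alpha\int_a^b u(x)\,v^{(\alpha)}(x)\,(b-x)^{\alpha-1}\,dx = \alpha!\bigl[u(b)v(b)-u(a)v(a)\bigr]-\alpha\int_a^b u^{(\alpha)}(x)\,v(x)\,(b-x)^{\alpha-1}\,dx.$$
This is obtained by combining two features of Jumarie's modified calculus already recorded in \cite{JuMod,JuTab,FrMult}: the Leibniz-type product rule $(uv)^{(\alpha)}=u^{(\alpha)}v+uv^{(\alpha)}$, and the fractional Barrow formula $\alpha\int_a^b w^{(\alpha)}(x)(b-x)^{\alpha-1}\,dx=\alpha!\,[w(b)-w(a)]$. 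Applying Leibniz with $w=uv$, weighting by $\alpha(b-x)^{\alpha-1}$, integrating over $[a,b]$, and invoking Barrow on the left side produces the identity.

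The second step is to transport this into two dimensions. For the term containing $h\cdot{_aD_x^\alpha[1]}\eta$ I would fix $y\in[c,d]$, apply the 1D IBP in the variable $x$ with $u=h(\cdot,y)$ and $v=\eta(\cdot,y)$, then multiply by $(d-y)^{\alpha-1}$ and integrate in $y$. Fubini's theorem is applicable because all integrands are continuous on the compact rectangle $R$ by the standing hypotheses. This transfers ${_aD_x^\alpha[1]}$ from $\eta$ onto $h$ and leaves an endpoint contribution at $x=a$ and $x=b$ that, after being paired with the $y$-weight and the $y$-integration, is exactly one of the two pieces of the fractional line integral on $\partial R$, according to the definition given just before the lemma. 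The same maneuver applied to the term $-k\cdot{_cD_y^\alpha[2]}\eta$, with $x$ and $y$ interchanged and the overall minus sign carried along, contributes the second component of the line integral and yields $+{_cD_y^\alpha[2]}k\cdot\eta$ in the bulk.

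The main obstacle, in my view, is not the reduction itself but the careful justification of the 1D ingredient for the class of continuous (and not necessarily differentiable) functions at hand. Jumarie's Leibniz rule must be applicable to the products $h\eta$ and $k\eta$, which forces one to rely on the regularity of the fractional derivatives assumed in the hypotheses; moreover, the endpoint weight $(b-x)^{\alpha-1}$ is singular at the right endpoint, so that the emergence of the prefactor $\alpha!=\Gamma(\alpha+1)$ on the boundary term must be tracked with some care to avoid losing a constant. Once the 1D identity is in hand with its correct normalization, the two-dimensional statement is essentially Fubini together with a matching of the endpoint contributions to the definition of $I_{\partial R}^\alpha[1]$ and $I_{\partial R}^\alpha[2]$.
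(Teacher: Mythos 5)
The paper itself does not prove Lemma~\ref{lemma:Green}: it is quoted from \cite{FrMult}, so there is no internal proof to compare against. Your plan --- derive a one-dimensional fractional integration by parts from Jumarie's product rule $(uv)^{(\alpha)}=u^{(\alpha)}v+uv^{(\alpha)}$ together with the $(dt)^\alpha$ Barrow-type formula $\alpha\int_a^b w^{(\alpha)}(t)(b-t)^{\alpha-1}dt=\alpha!\,\bigl(w(b)-w(a)\bigr)$, then apply it slice-wise in each variable (Fubini being unproblematic for continuous integrands on $R$) and identify the endpoint terms with the components of $I_{\partial R}^\alpha$ --- is exactly the route taken in that reference, and your caveat that the Leibniz rule is only formally justified for merely continuous, nondifferentiable functions is fair; the source operates at the same level of rigor.

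One bookkeeping point, however, you should not leave implicit, because a faithful execution of your plan does \emph{not} return the statement literally as printed. Integrating by parts in $x$ in the term $h\,{_{a}D_{x}^\alpha}[1]\eta$ leaves endpoint values at $x=a$ and $x=b$, integrated in $y$ against $(d-y)^{\alpha-1}$: by the definition preceding the lemma this is the component $I_{\partial R}^\alpha[2](h\eta)$, not $I_{\partial R}^\alpha[1](h\eta)$; symmetrically the $k$ term produces $I_{\partial R}^\alpha[1](k\eta)$. Moreover the clean coefficient $\alpha!$ appears only when both double integrals carry the weight $\alpha^2$ of the fractional volume integral (as they do in \eqref{eq:Funct} and \eqref{eq:Nat}); with the normalization displayed in the lemma the boundary coefficient would be $\alpha!/\alpha^2$. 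What your argument actually establishes is
\begin{equation*}
\begin{split}
&\alpha^2\int\limits_a^b\int\limits_c^d\bigl[h\,{_{a}D_{x}^\alpha}[1]\eta-k\,{_{c}D_{y}^\alpha}[2]\eta\bigr](b-x)^{\alpha-1}(d-y)^{\alpha-1}dy\,dx\\
&=-\alpha^2\int\limits_a^b\int\limits_c^d\bigl[{_{a}D_{x}^\alpha}[1]h-{_{c}D_{y}^\alpha}[2]k\bigr]\eta\,(b-x)^{\alpha-1}(d-y)^{\alpha-1}dy\,dx
+\alpha!\left[I_{\partial R}^\alpha[2](h\eta)+I_{\partial R}^\alpha[1](k\eta)\right],
\end{split}
\end{equation*}
and the classical limit $\alpha\to1$ (ordinary Green's theorem) confirms that this pairing of $[1]$ and $[2]$ is the correct one. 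So treat the interchange of $[1]$ and $[2]$ and the missing $\alpha^2$ in the quoted lemma as slips in transcription from \cite{FrMult} --- harmless for the paper, since in \eqref{eq:MoreParticularGreen} the boundary term vanishes and the natural-boundary-condition argument in Theorem~\ref{thm:NatBound} uses the edge terms in the placement your derivation gives --- rather than trying to force your endpoint terms to match the printed formula.
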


\begin{remark}
If $\eta\equiv 0$ on $\partial R$ in Lemma~\ref{lemma:Green}, then
\begin{multline}
\label{eq:MoreParticularGreen}
\int\limits_a^b\int\limits_c^d\left[h(x,y)_{a}D_{x}^\alpha[1]\eta(x,y)
-k(x,y)_{c}D_{y}^\alpha[2]\eta(x,y)\right](b-x)^{\alpha-1}(d-y)^{\alpha-1}dydx\\
=-\int\limits_a^b\int\limits_c^d\left[_{a}D_{x}^\alpha[1]h(x,y)
-_{c}D_{y}^\alpha[2]k(x,y)\right]\eta(x,y)(b-x)^{\alpha-1}(d-y)^{\alpha-1}dydx.
\end{multline}
\end{remark}


\subsection{Necessary Optimality Condition}
\label{sub:sec:FrIP}

The next theorem gives a necessary optimality condition for $u$
to be a solution of the fractional isoperimetric problem defined
by \eqref{eq:Funct}-\eqref{eq:bound}.

\begin{theorem}[Euler--Lagrange fractional optimality
condition to \eqref{eq:Funct}-\eqref{eq:bound}]
\label{thm:EL}
If $u$ is a local minimizer
to problem \eqref{eq:Funct}-\eqref{eq:bound},
then there exists a nonzero pair of constants
$(\lambda_0,\lambda)$ such that $u$ satisfies
the fractional PDE
\begin{equation}
\label{eq:EL}
\partial _3 H\left\{u\right\}(x,y)
- _{a}D_{x}^\alpha[1]\partial _4 H\left\{u\right\}(x,y)
- _{c}D_{y}^\alpha[2]\partial_5 H\left\{u\right\}(x,y)=0
\end{equation}
for all $(x,y)\in R$, where
$$
H(x,y,u,v,w,\lambda_0,\lambda):=\lambda_0 f(x,y,u,v,w)+\lambda g(x,y,u,v,w)
$$
and, for simplicity of notation, we use the operator $\left\{\cdot\right\}$ defined by
\begin{equation*}
\left\{u\right\}(x,y)
:=\left(x,y,u(x,y),_{a}D_{x}^\alpha[1]u(x,y),_{c}D_{y}^\alpha[2]u(x,y),\lambda_0,\lambda\right).
\end{equation*}
\end{theorem}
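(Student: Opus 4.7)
The plan is to follow the classical Lagrange multiplier approach to isoperimetric problems, adapted to the double-integral fractional setting, with Green's fractional formula \eqref{eq:MoreParticularGreen} playing the role that integration by parts plays in the classical proof.

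First, I would introduce a two-parameter family of admissible variations. Pick continuous functions $\eta_{1},\eta_{2}$ on $R$ that vanish on $\partial R$ (so that the boundary condition \eqref{eq:bound} is preserved), and consider $\hat{u}(x,y)=u(x,y)+\varepsilon_{1}\eta_{1}(x,y)+\varepsilon_{2}\eta_{2}(x,y)$ with $(\varepsilon_{1},\varepsilon_{2})$ in a neighbourhood of the origin. Define
\begin{equation*}
\mathcal{J}(\varepsilon_{1},\varepsilon_{2}) := J[\hat u], \qquad
\mathcal{K}(\varepsilon_{1},\varepsilon_{2}) := \alpha^{2}\!\!\int_{a}^{b}\!\!\int_{c}^{d} g\{\hat u\}(x,y)\,(b-x)^{\alpha-1}(d-y)^{\alpha-1}\,dy\,dx - K.
\end{equation*}
Here $\hat u$ also satisfies the isoperimetric constraint iff $\mathcal{K}=0$. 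The assumed regularity of $f$, $g$, and of the fractional derivatives of $\partial_{4}f,\partial_{5}f,\partial_{4}g,\partial_{5}g$ justifies differentiation under the integral sign.

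Next, I would split into the normal and abnormal cases. \emph{Case 1 (normal):} suppose there exists $\eta_{2}$ with $\tfrac{\partial \mathcal{K}}{\partial \varepsilon_{2}}(0,0)\neq 0$. By the implicit function theorem, $\varepsilon_{2}$ can be solved as a $C^{1}$ function of $\varepsilon_{1}$ near $0$ with $\mathcal{K}\equiv 0$ along the resulting curve; the minimality of $u$ then yields $\tfrac{d}{d\varepsilon_{1}}\mathcal{J}\big|_{0}=0$ subject to that constraint. The ordinary Lagrange multiplier rule produces a real $\lambda$ such that
\begin{equation*}
\frac{\partial \mathcal{J}}{\partial \varepsilon_{1}}(0,0)+\lambda\frac{\partial \mathcal{K}}{\partial \varepsilon_{1}}(0,0)=0
\end{equation*}
for every admissible $\eta_{1}$; set $\lambda_{0}=1$. \emph{Case 2 (abnormal):} if $\tfrac{\partial \mathcal{K}}{\partial \varepsilon_{2}}(0,0)=0$ for every $\eta_{2}$, take $\lambda_{0}=0$, $\lambda=1$; the analogous identity still holds trivially. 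In either case we obtain a nonzero $(\lambda_{0},\lambda)$ with
\begin{equation*}
\alpha^{2}\!\!\int_{a}^{b}\!\!\int_{c}^{d}\!\!\Bigl[\partial_{3}H\{u\}\eta_{1}+\partial_{4}H\{u\}\,_{a}D_{x}^{\alpha}[1]\eta_{1}+\partial_{5}H\{u\}\,_{c}D_{y}^{\alpha}[2]\eta_{1}\Bigr](b-x)^{\alpha-1}(d-y)^{\alpha-1}\,dy\,dx=0,
\end{equation*}
since $_{a}D_{x}^{\alpha}[1]$ and $_{c}D_{y}^{\alpha}[2]$ are linear and annihilate $u$-independent terms in the same way as partial derivatives commute with the variation parameter.

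Then I would use the fractional Green formula. Applying \eqref{eq:MoreParticularGreen} with $h=\partial_{4}H\{u\}$ and $k=-\partial_{5}H\{u\}$ (the sign chosen to match the $-k$ on the left-hand side), and using that $\eta_{1}\equiv 0$ on $\partial R$ so the boundary term vanishes, transfers both fractional derivatives off of $\eta_{1}$. The identity rewrites as
\begin{equation*}
\int_{a}^{b}\!\!\int_{c}^{d}\Bigl[\partial_{3}H\{u\}-{}_{a}D_{x}^{\alpha}[1]\partial_{4}H\{u\}-{}_{c}D_{y}^{\alpha}[2]\partial_{5}H\{u\}\Bigr]\eta_{1}(x,y)\,(b-x)^{\alpha-1}(d-y)^{\alpha-1}\,dy\,dx=0
\end{equation*}
for every continuous $\eta_{1}$ vanishing on $\partial R$. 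By the fundamental lemma of the calculus of variations applied to the continuous bracketed expression (the weight $(b-x)^{\alpha-1}(d-y)^{\alpha-1}$ is strictly positive on the open rectangle, so it does not affect the localization argument), the bracket must vanish pointwise on $R$, which is exactly \eqref{eq:EL}.

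The main obstacle is the Lagrange multiplier step: one must carefully justify the dichotomy into normal/abnormal cases in a function-space setting, verify that differentiation under the double fractional integral is permissible under the stated $C^{1}$ hypotheses on $f$, $g$ and their relevant fractional partial derivatives, and check that the resulting bracket in the integrand is continuous so that the fundamental lemma of the calculus of variations applies in this fractional framework.
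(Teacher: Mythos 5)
Your proposal is correct and follows essentially the same route as the paper: a variation vanishing on $\partial R$, differentiation under the integral sign, the fractional Green formula \eqref{eq:MoreParticularGreen} to transfer the fractional derivatives off the variation, and the fundamental lemma of the calculus of variations with the weight $(b-x)^{\alpha-1}(d-y)^{\alpha-1}$, which is positive in the interior. The only real difference is that you unpack the multiplier step (two-parameter variation, implicit function theorem, normal/abnormal dichotomy) where the paper simply invokes the extended Lagrange multiplier rule from van Brunt for the one-parameter variation $u+\varepsilon\eta$; just note that, as in the paper, the variations $\eta_1,\eta_2$ should be taken regular enough (e.g.\ $C^1(R)$) so that their fractional partial derivatives exist and are continuous and the perturbed functions stay in the admissible class.
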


\begin{proof}
Let us define the function
\begin{equation}
\label{eq:curves}
\hat{u}_\varepsilon(x,y)=u(x,y)+\varepsilon\eta(x,y),
\end{equation}
where $\eta$ is such that $\eta\in C^1(R)$,
\begin{equation*}
\left.\eta(x,y)\right|_{\partial R}=0,
\end{equation*}
and $\varepsilon\in\,\mathbb{R}$. If $\varepsilon$ take
values sufficiently close to zero, then
\eqref{eq:curves} is included into the first
order neighborhood of $u$, \textrm{i.e.}, there
exists $\delta>0$ such that $\hat{u}_\varepsilon \in U_1(u,\delta)$, where
\begin{equation*}
U_1(u,\delta)
:= \left\{\hat{u}(x,y) : \left\|u - \hat{u}\right\|_{1,\infty} <\delta \right\}.
\end{equation*}
On the other hand,
\begin{equation*}
\hat{u}_0(x,y)=u, \frac{\partial \hat{u}_\varepsilon(x,y)}{\partial \varepsilon}=\eta,
\frac{\partial _{a}D_{x}^\alpha[1] \hat{u}_\varepsilon(x,y)}{\partial \varepsilon}
=_{a}D_{x}^\alpha[1]\eta, \frac{\partial _{c}D_{y}^\alpha[2]\hat{u}_\varepsilon(x,y)}{\partial\varepsilon}
=_{c}D_{y}^\alpha[2]\eta.
\end{equation*}
Let
\begin{equation*}
F(\varepsilon)=\alpha^2\int\limits_a^b\int\limits_c^d
f(x,y,\hat{u}_\varepsilon(x,y),_{a}D_{x}^\alpha[1]\hat{u}_\varepsilon(x,y),
_{c}D_{y}^\alpha[2]\hat{u}_\varepsilon(x,y))(b-x)^{\alpha-1}(d-y)^{\alpha-1}dydx,
\end{equation*}
and
\begin{equation*}
G(\varepsilon)=\alpha^2\int\limits_a^b\int\limits_c^d
g(x,y,\hat{u}_\varepsilon(x,y),
_{a}D_{x}^\alpha[1]\hat{u}_\varepsilon(x,y),_{c}D_{y}^\alpha[2]\hat{u}_\varepsilon(x,y))
(b-x)^{\alpha-1}(d-y)^{\alpha-1}dydx.
\end{equation*}
Define the Lagrange function by
\begin{equation*}
L(\varepsilon,\lambda_0,\lambda)
=\lambda_0F(\varepsilon)+\lambda\left(G(\varepsilon)-K\right).
\end{equation*}
Then, by the extended Lagrange multiplier rule (see, \textrm{e.g.}, \cite{Brunt}),
we can choose multipliers $\lambda_0$ and $\lambda$, not both zero, such that
\begin{equation}
\label{eq:LagrRul}
\frac{\partial L(0,\lambda_0,\lambda)}{\partial\varepsilon}
=\left.\lambda_0\frac{\partial F}{\partial\varepsilon}\right|_{\varepsilon=0}
+\lambda\left.\frac{\partial G}{\partial\varepsilon}\right|_{\varepsilon=0}=0.
\end{equation}
The term $\left.\frac{\partial F}{\partial\varepsilon}\right|_{\varepsilon=0}$
is equal to
\begin{equation}
\label{eq:1}
\begin{split}
\alpha^2 &\int\limits_a^b\int\limits_c^d
\left\{\frac{\partial}{\partial\varepsilon}\left[f(x,y,\hat{u}_\varepsilon,_{a}D_{x}^\alpha[1]\hat{u}_\varepsilon,
_{c}D_{y}^\alpha[2]\hat{u}_\varepsilon)(b-x)^{\alpha-1}(d-y)^{\alpha-1}\right]\right\}_{\varepsilon=0}dydx\\
&=\alpha^2\int\limits_a^b\int\limits_c^d \partial_3 f (b-x)^{\alpha-1}(d-y)^{\alpha-1} dydx \\
& \qquad +\alpha^2\int\limits_a^b\int\limits_c^d \left[\partial_4 f _{a}D_{x}^\alpha[1]\eta
+\partial_5 f _{c}D_{y}^\alpha[2]\eta \right](b-x)^{\alpha-1}(d-y)^{\alpha-1} dydx.
\end{split}
\end{equation}
By \eqref{eq:MoreParticularGreen} the last
double integral in \eqref{eq:1} may be transformed as follows:
\begin{equation*}
\begin{split}
& \alpha^2\int\limits_a^b\int\limits_c^d
\left[\partial_4 f _{a}D_{x}^\alpha[1]\eta
+\partial_5 f _{c}D_{y}^\alpha[2]\eta \right](b-x)^{\alpha-1}(d-y)^{\alpha-1} dydx\\
& \quad =-\alpha^2\int\limits_a^b\int\limits_c^d \left[_{a}D_{x}^\alpha[1]\partial_4 f
+ _{c}D_{y}^\alpha[2]\partial_5 f \right]\eta(b-x)^{\alpha-1}(d-y)^{\alpha-1} dydx.
\end{split}
\end{equation*}
Hence,
\begin{equation}
\label{eq:2}
\left.\frac{\partial F}{\partial\varepsilon}\right|_{\varepsilon=0}
=\alpha^2\int\limits_a^b\int\limits_c^d \left[\partial_3 f -_{a}D_{x}^\alpha[1]\partial_4 f
- _{c}D_{y}^\alpha[2]\partial_5 f\right]\eta(b-x)^{\alpha-1}(d-y)^{\alpha-1} dydx.
\end{equation}
Similarly,
\begin{equation}
\label{eq:3}
\left.\frac{\partial G}{\partial\varepsilon}\right|_{\varepsilon=0}
=\alpha^2\int\limits_a^b\int\limits_c^d \left[\partial_3 g
-_{a}D_{x}^\alpha[1]\partial_4 g
- _{c}D_{y}^\alpha[2]\partial_5 g\right]\eta(b-x)^{\alpha-1}(d-y)^{\alpha-1} dydx.
\end{equation}
Substituting \eqref{eq:2} and \eqref{eq:3} into \eqref{eq:LagrRul}, it results that
\begin{equation*}
\label{eq:4}
\begin{split}
&\frac{\partial L(\varepsilon,\lambda_0,\lambda)}{\partial\varepsilon}
=\alpha^2\int\limits_a^b\int\limits_c^d \Bigl[\lambda_0\left(\partial_3 f
-_{a}D_{x}^\alpha[1]\partial_4 f - _{c}D_{y}^\alpha[2]\partial_5 f\right)\\
&\quad +\lambda\left(\partial_3 g -_{a}D_{x}^\alpha[1]\partial_4 g
- _{c}D_{y}^\alpha[2]\partial_5 g\right)\Bigr]\eta(b-x)^{\alpha-1}(d-y)^{\alpha-1} dydx=0.
\end{split}
\end{equation*}
Finally, since $\eta\equiv0$ on $\partial R$, the fundamental
lemma of the calculus of variations (see, \textrm{e.g.}, \cite{Marutani})
implies that
\begin{equation*}
\partial _3 H\left\{u\right\}(x,y)
- _{a}D_{x}^\alpha[1]\partial _4 H\left\{u\right\}(x,y)
- _{c}D_{y}^\alpha[2]\partial_5 H\left\{u\right\}(x,y)=0.
\end{equation*}
\end{proof}


\subsection{Natural Boundary Conditions}
\label{sub:sec:NT}

In this section we consider problem \eqref{eq:Funct}-\eqref{eq:IFunct},
\textrm{i.e.}, we consider the case when the value of function
$u=u(x,y)$ is not preassigned on $\partial R$.

\begin{theorem}[Fractional natural boundary conditions to \eqref{eq:Funct}-\eqref{eq:IFunct}]
\label{thm:NatBound}
If $u$ is a local minimizer to problem \eqref{eq:Funct}-\eqref{eq:IFunct},
then $u$ is a solution of the fractional differential
equation \eqref{eq:EL}. Moreover, it satisfies the following conditions:
\begin{enumerate}
\item $\partial_4 H \left\{u\right\}(a,y)=0$ for all $y\in[c,d]$;
\item $\partial_4 H \left\{u\right\}(b,y)=0$ for all $y\in[c,d]$;
\item $\partial_5 H \left\{u\right\}(x,c)=0$ for all $x\in[a,b]$;
\item $\partial_5 H \left\{u\right\}(x,d)=0$ for all $x\in[a,b]$.
\end{enumerate}
\end{theorem}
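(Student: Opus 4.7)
The plan is to mirror the argument of the proof of Theorem~\ref{thm:EL}, but to drop the condition $\eta|_{\partial R}=0$ on the perturbation used to build the one-parameter family $\hat u_\varepsilon(x,y)=u(x,y)+\varepsilon\eta(x,y)$. Concretely, I would first consider arbitrary $\eta\in C^{1}(R)$, form the Lagrange function $L(\varepsilon,\lambda_0,\lambda)=\lambda_0 F(\varepsilon)+\lambda(G(\varepsilon)-K)$ exactly as before, and invoke the extended Lagrange multiplier rule to produce $(\lambda_0,\lambda)\neq (0,0)$ with $\partial L/\partial\varepsilon|_{\varepsilon=0}=0$. Nothing in that step depended on $\eta$ vanishing on $\partial R$, so it transports verbatim.

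The next step is to integrate by parts using the full Green's fractional formula (Lemma~\ref{lemma:Green}) instead of its corollary \eqref{eq:MoreParticularGreen}. Taking $h=\partial_4 H\{u\}$ and $k=-\partial_5 H\{u\}$, I would rewrite the first variation as
\begin{equation*}
\begin{split}
0 &= \alpha^{2}\!\int_a^b\!\!\int_c^d\!\Bigl[\partial_3 H - {_{a}D_{x}^\alpha[1]}\partial_4 H - {_{c}D_{y}^\alpha[2]}\partial_5 H\Bigr]\{u\}\,\eta\,(b-x)^{\alpha-1}(d-y)^{\alpha-1}\,dy\,dx \\
&\quad + \alpha!\Bigl[I_{\partial R}^{\alpha}[1](\partial_4 H\{u\}\,\eta) - I_{\partial R}^{\alpha}[2](\partial_5 H\{u\}\,\eta)\Bigr].
\end{split}
\end{equation*}
Restricting first to the subclass of test functions vanishing on $\partial R$ kills the second line, and the same appeal to the fundamental lemma of the calculus of variations used in the proof of Theorem~\ref{thm:EL} then delivers the Euler--Lagrange PDE \eqref{eq:EL} pointwise on $R$.

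Having secured \eqref{eq:EL}, the double integral vanishes identically for \emph{every} admissible $\eta$, so what remains is
\begin{equation*}
I_{\partial R}^{\alpha}[1](\partial_4 H\{u\}\,\eta) - I_{\partial R}^{\alpha}[2](\partial_5 H\{u\}\,\eta)=0 \quad \text{for all } \eta\in C^{1}(R).
\end{equation*}
Unfolding the definitions, this is a sum of four single integrals along the edges of the rectangle $R$, each with weight $(b-t)^{\alpha-1}$ or $(d-t)^{\alpha-1}$. The finish is to localize: choose $\eta$ supported near a single edge of $\partial R$ (and vanishing on the other three) to decouple the four terms, and then apply the one-dimensional fundamental lemma of the calculus of variations to each edge separately, using that the weight is continuous and strictly positive on the open edge. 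This produces the four natural boundary conditions listed in the theorem.

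The main obstacle is the bookkeeping in the last stage: matching each boundary integrand produced by Lemma~\ref{lemma:Green} with the correct edge of $\partial R$ and the correct conclusion among 1--4, while keeping track of the signs introduced by writing the integrand of the variation as $h\,{_{a}D_{x}^\alpha[1]\eta}-k\,{_{c}D_{y}^\alpha[2]\eta}$. The rest of the proof is essentially a repetition of the Lagrange multiplier and Euler--Lagrange argument already carried out for Theorem~\ref{thm:EL}, specialized twice: once on functions vanishing on $\partial R$ to extract the PDE, and once on functions supported near a single edge to extract each natural boundary condition.
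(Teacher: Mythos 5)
Your proposal is correct and is essentially the paper's own proof: leave $\eta$ free on $\partial R$, apply the full Green's fractional formula (Lemma~\ref{lemma:Green}) to write the first variation as a double integral plus boundary terms, obtain \eqref{eq:EL} by first restricting to $\eta$ vanishing on $\partial R$, and then extract each natural boundary condition by localizing $\eta$ on one edge at a time (the paper takes $\eta$ vanishing on the three-edge set $S_1$) and invoking the one-dimensional fundamental lemma. One caution on the bookkeeping you yourself flagged: the $\partial_4 H\{u\}\,\eta$ terms must end up evaluated on the edges $x=a$ and $x=b$ with weight $(d-y)^{\alpha-1}$, and the $\partial_5 H\{u\}\,\eta$ terms on $y=c$ and $y=d$ with weight $(b-x)^{\alpha-1}$, exactly as in the paper's \eqref{eq:5}; a literal unfolding of the boundary terms of Lemma~\ref{lemma:Green} as printed (where $I_{\partial R}^{\alpha}[1]$ and $I_{\partial R}^{\alpha}[2]$ appear interchanged relative to their stated definitions --- compare the $\alpha\to 1$ limit) would attach $\partial_4 H$ to the edges $y=c,d$ and yield conditions incompatible with the theorem, so it is safer to perform this step by integrating by parts in each variable separately.
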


\begin{proof}
Since in problem \eqref{eq:Funct}-\eqref{eq:IFunct} no boundary condition is imposed,
we do not require $\eta$ in the proof o Theorem~\ref{thm:EL} to vanish on $\partial R$.
Therefore, following the proof of Theorem~\ref{thm:EL}, we obtain
\begin{multline}
\label{eq:Nat}
\alpha^2\int\limits_a^b\int\limits_c^d
\left(\partial _3 H\left\{u\right\}(x,y)
+_{a}D_{x}^\alpha[1]\partial_4 H\left\{u\right\}(x,y)\right.\\
\left. +_{c}D_{y}^\alpha[2]
\partial_5 H\left\{u\right\}(x,y)\right)\eta(b-x)^{\alpha-1}(d-y)^{\alpha-1} dydx\\
+\alpha !\left[I_{\partial R}^\alpha[1](\partial _4 H\left\{u\right\}(x,y)\eta)
+I_{\partial R}^\alpha[2](\partial _5 H\left\{u\right\}(x,y) \eta)\right]=0,
\end{multline}
where $\eta$ is an arbitrary continuous function. In particular, the above equation holds
for $\eta\equiv 0$ on $\partial R$. If $\left.\eta(x,y)\right|_{\partial R}=0$,
the second member of the sum in \eqref{eq:Nat} vanishes and the fundamental lemma
of the calculus of variations (see, \textrm{e.g.}, \cite{Marutani}) implies \eqref{eq:EL}.
With this result equation \eqref{eq:Nat} takes the form
\begin{multline}
\label{eq:5}
\int\limits_c^d\partial_4 H \left\{u\right\}(b,y)\eta(b,y)(d-y)^{\alpha-1} dy
-\int\limits_c^d\partial_4 H \left\{u\right\}(a,y)\eta(a,y)(d-y)^{\alpha-1} dy\\
-\int\limits_a^b\partial_5 H \left\{u\right\}(x,c)\eta(x,c)(b-x)^{\alpha-1} dx
-\int\limits_a^b\partial_5 H \left\{u\right\}(x,d)\eta(x,d)(b-x)^{\alpha-1} dx = 0.
\end{multline}
Let $S_1=([a,b]\times {c})\cup([a,b]\times {d})\cup({b}\times[c,d])$.
Since $\eta$ is an arbitrary function, we can consider the subclass of functions
for which $\left.\eta(x,y)\right|_{S_1}=0$.
For such $\eta$, equation \eqref{eq:5} reduces to
\begin{equation*}
0=\int\limits_c^d\partial_4 H \left\{u\right\}(a,y)\eta(a,y)(d-y)^{\alpha-1} dy.
\end{equation*}
By the fundamental lemma of calculus of variations, we obtain that
\begin{equation*}
\partial_4 H \left\{u\right\}(a,y)=0
\end{equation*}
for all $y\in[c,d]$.
We prove the other natural boundary conditions in a similar way.
\end{proof}


\subsection{Sufficient Condition}
\label{sub:sec:OC}

We now prove a sufficient condition that ensures existence
of global minimum under appropriate convexity assumptions.

\begin{theorem}
\label{thm:Suff}
Let $H(x,y,u,v,w,\lambda_0,\lambda)
=\lambda_0 f(x,y,u,v,w)+\lambda g(x,y,u,v,w)$
be a convex function of $u$, $v$ and $w$.
If $u(x,y)$ satisfies \eqref{eq:EL}, then for an arbitrary
admissible function $\hat{u}(\cdot,\cdot)$ the following holds:
\begin{equation*}
J[\hat{u}(\cdot,\cdot)]\geq J[u(\cdot,\cdot)],
\end{equation*}
\textrm{i.e.}, $u(\cdot,\cdot)$ minimizes \eqref{eq:Funct}.
\end{theorem}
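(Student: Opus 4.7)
The strategy is the standard convexity argument adapted to the fractional multi-time setting: reduce $J[\hat{u}]-J[u]$ to an integral whose integrand is linear in the variation $\eta=\hat{u}-u$, then use the Euler--Lagrange equation \eqref{eq:EL} together with Green's fractional formula to kill that linear part.

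First, I set $\eta(x,y):=\hat{u}(x,y)-u(x,y)$. Since both $u$ and $\hat{u}$ are admissible, two facts hold that will be used repeatedly: $\eta\equiv 0$ on $\partial R$ (from the boundary condition \eqref{eq:bound}), and the isoperimetric constraint gives $G[\hat{u}]-G[u]=0$, where $G$ denotes the functional on the left-hand side of \eqref{eq:IFunct}. From the definition of the fractional partial derivatives in Definition~\ref{dfn:FracPart} one reads off linearity, so ${}_{a}D_{x}^{\alpha}[1]\hat{u}={}_{a}D_{x}^{\alpha}[1]u+{}_{a}D_{x}^{\alpha}[1]\eta$ and similarly for the $y$-derivative.

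Next, I invoke convexity of $H$ in $(u,v,w)$, which yields the pointwise inequality
\begin{equation*}
H\{\hat{u}\}(x,y)-H\{u\}(x,y)\geq \partial_3 H\{u\}\,\eta + \partial_4 H\{u\}\,{}_{a}D_{x}^{\alpha}[1]\eta + \partial_5 H\{u\}\,{}_{c}D_{y}^{\alpha}[2]\eta.
\end{equation*}
Multiplying by $\alpha^2(b-x)^{\alpha-1}(d-y)^{\alpha-1}$ and integrating over $R$, the left-hand side becomes $\lambda_0(J[\hat{u}]-J[u])+\lambda(G[\hat{u}]-G[u])=\lambda_0(J[\hat{u}]-J[u])$ by the isoperimetric constraint.

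Finally, I apply Green's fractional formula in the form \eqref{eq:MoreParticularGreen} (legitimate because $\eta$ vanishes on $\partial R$) with $h=\partial_4 H\{u\}$ and $k=-\partial_5 H\{u\}$ to transfer the fractional derivatives off $\eta$. The right-hand side of the integrated convexity inequality then reads
\begin{equation*}
\alpha^2\int_a^b\!\int_c^d \Bigl[\partial_3 H\{u\}-{}_{a}D_{x}^{\alpha}[1]\partial_4 H\{u\}-{}_{c}D_{y}^{\alpha}[2]\partial_5 H\{u\}\Bigr]\eta\,(b-x)^{\alpha-1}(d-y)^{\alpha-1}dy\,dx,
\end{equation*}
which is zero by hypothesis \eqref{eq:EL}. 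Hence $\lambda_0(J[\hat{u}]-J[u])\geq 0$, and taking the normal case $\lambda_0>0$ (as is implicit in any sufficient-condition statement of this form) gives $J[\hat{u}]\geq J[u]$.

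The only delicate point I foresee is the interpretation of the multiplier pair: the convexity argument only produces the desired inequality when $\lambda_0>0$, so I would flag this assumption explicitly (or normalize $\lambda_0=1$). Everything else is a bookkeeping exercise: linearity of the fractional partial derivatives, a single application of Green's fractional formula, and direct substitution of \eqref{eq:EL}.
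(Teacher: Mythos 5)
Your proof is correct and follows essentially the same route as the paper's: the pointwise convexity inequality for $H$, Green's fractional formula \eqref{eq:MoreParticularGreen} (legitimate since $\eta=\hat{u}-u$ vanishes on $\partial R$ because both functions satisfy \eqref{eq:bound}), and substitution of \eqref{eq:EL} to annihilate the linearized term. Your explicit flagging of the multiplier is in fact more careful than the paper, whose computation of $J[\hat{u}]-J[u]$ silently takes $\lambda_0=1$ (its added-and-subtracted constraint terms yield $H\{\hat{u}\}-H\{u\}$ only under that normalization), so keeping $\lambda_0>0$ (or normalizing $\lambda_0=1$) as an explicit assumption, as you propose, is the right call.
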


\begin{proof}
Define the following function:
\begin{equation*}
\mu(x,y):=\hat{u}(x,y)-u(x,y).
\end{equation*}
Obviously,
\begin{equation*}
\left.\mu(x,y)\right|_{\partial R}=0.
\end{equation*}
Since $H \left\{\hat{u}\right\}(x,y)$ is convex and
$_{a}D_{x}^\alpha[1]$, $_{c}D_{y}^\alpha[2]$ are linear operators,
we obtain that
\begin{equation}
\label{eq:convex}
\begin{split}
&H\left\{\hat{u}\right\}(x,y)-H \left\{u\right\}(x,y)\\
&\geq (\hat{u}(x,y)-u(x,y))\partial _3 H\left\{u\right\}(x,y)
+\left(_{a}D_{a}^\alpha[1]\hat{u}(x,y)-_{a}D_{x}^\alpha[1]u(x,y)\right)
\partial _4 H\left\{u\right\}(x,y)\\
&\quad +\left(_{c}D_{y}^\alpha[2]\hat{u}(x,y)
-_{c}D_{y}^\alpha[2]u(x,y)\right)\partial _5 H\left\{u\right\}(x,y)\\
&=(\hat{u}(x,y)-u(x,y))\partial _3 H\left\{u\right\}(x,y)
+_{a}D_{x}^\alpha[1]\left(\hat{u}(x,y)-u(x,y)\right)
\partial _4 H\left\{u\right\}(x,y)\\
&\quad +_{c}D_{y}^\alpha[2]\left(\hat{u}(x,y)-u(x,y)\right)
\partial _5 H\left\{u\right\}(x,y)\\
&=\mu(x,y)\partial _3 H\left\{u\right\}(x,y)
+_{a}D_{x}^\alpha[1]\mu(x,y) \partial _4 H\left\{u\right\}(x,y)
+_{c}D_{y}^\alpha[2]\mu(x,y)\partial _5 H\left\{u\right\}(x,y),
\end{split}
\end{equation}
where the $\lambda_0$ and $\lambda$ that appear in
$\left\{u\right\}(x,y)$
are constants whose existence
is assured by Theorem~\ref{thm:EL}. Therefore,\footnote{From now on we omit,
for brevity, the arguments $(x,y)$.}
\begin{equation*}
\begin{split}
& J[\hat{u}(\cdot,\cdot)]- J[u(\cdot,\cdot)]\\
& =\alpha^2\int\limits_a^b\int\limits_c^d
f(x,y,\hat{u},_{a}D_{x}^\alpha[1]\hat{u},
_{c}D_{y}^\alpha[2]\hat{u})(b-x)^{\alpha-1}(d-y)^{\alpha-1}dydx\\
& \quad -\alpha^2\int\limits_a^b\int\limits_c^d
f(x,y,u,_{a}D_{x}^\alpha[1]u,_{c}D_{y}^\alpha[2]u)(b-x)^{\alpha-1}(d-y)^{\alpha-1}dydx\\
& \quad +\lambda_0\left(\alpha^2\int\limits_a^b\int\limits_c^d
g(x,y,\hat{u},_{a}D_{x}^\alpha[1]\hat{u},
_{c}D_{y}^\alpha[2]\hat{u})(b-x)^{\alpha-1}(d-y)^{\alpha-1}dydx-K\right)\\
& \quad -\lambda_0\left(\alpha^2\int\limits_a^b
\int\limits_c^dg(x,y,\hat{u},_{a}D_{x}^\alpha[1]\hat{u},
_{c}D_{y}^\alpha[2]u)(b-x)^{\alpha-1}(d-y)^{\alpha-1}dydx-K\right)\\
&  =\alpha^2\int\limits_a^b\int\limits_c^d\left(H\left\{\hat{u}\right\}
-H\left\{u\right\}\right)(b-x)^{\alpha-1}(d-y)^{\alpha-1}dydx .
\end{split}
\end{equation*}
Using \eqref{eq:convex} and \eqref{eq:MoreParticularGreen}, we get
\begin{equation*}
\begin{split}
\alpha^2 & \int\limits_a^b\int\limits_c^d\left(H\left\{\hat{u}\right\}
-H\left\{u\right\}\right)(b-x)^{\alpha-1}(d-y)^{\alpha-1}dydx\\
& \geq\alpha^2\int\limits_a^b\int\limits_c^d
\mu\partial _3 H\left\{u\right\}(b-x)^{\alpha-1}(d-y)^{\alpha-1}dydx\\
& \quad +\alpha^2\int\limits_a^b\int\limits_c^d\left(_{a}D_{x}^\alpha[1]
\mu\partial _4 H\left\{u\right\}+_{c}D_{y}^\alpha[2]\mu
\partial_5 H\left\{u\right\}\right)(b-x)^{\alpha-1}(d-y)^{\alpha-1}dydx\\
& =\alpha^2\int\limits_a^b\int\limits_c^d\mu
\partial_3 H\left\{u\right\}(b-x)^{\alpha-1}(d-y)^{\alpha-1}dydx\\
& \quad+\alpha^2\int\limits_a^b\int\limits_c^d\left(_{a}D_{x}^\alpha[1]
\partial_4 H\left\{u\right\}+_{c}D_{y}^\alpha[2]
\partial_5 H\left\{u\right\}\right)\mu(b-x)^{\alpha-1}(d-y)^{\alpha-1}dydx\\
& =\alpha^2\int\limits_a^b\int\limits_c^d\left(\partial_3 H\left\{u\right\}
+_{a}D_{x}^\alpha[1]\partial _4 H\left\{u\right\}\right.\\
& \quad \left.+_{c}D_{y}^\alpha[2]
\partial_5 H\left\{u\right\}\right)\mu(b-x)^{\alpha-1}(d-y)^{\alpha-1}dydx\\
& =0.
\end{split}
\end{equation*}
Thus, $J[\hat{u}(\cdot,\cdot)]\geq J[u(\cdot,\cdot)]$.
\end{proof}


\section{Conclusion}

The fractional calculus provides a very useful framework
to deal with nonlocal dynamics: if one wants to include memory effects,
\textrm{i.e.}, the influence of the past on the behaviour
of the system at present time, then one may use fractional derivatives.
The proof of fractional Euler--Lagrange equations
is a subject of strong current study
because of its numerous applications. However, while the single time
case is well developed, the multitime fractional variational theory
is in its childhood, and much remains to be done.
In this work we consider a new class of multitime fractional
functionals of the calculus of variations subject to isoperimetric
constraints. We prove both necessary and sufficient optimality conditions
via the modified Riemann--Liouville approach.


\section*{Acknowledgments}

This work is part of the first author's Ph.D. project carried out at
the University of Aveiro under the framework of the Doctoral
Programme \emph{Mathematics and Applications} of Universities
of Aveiro and Minho. The financial support of
\emph{The Portuguese Foundation for Science and Technology} (FCT),
through the Ph.D. fellowship SFRH/BD/33865/2009, is here gratefully
acknowledged. The authors were also supported by FCT through the
\emph{Center for Research and Development in Mathematics and Applications}
(CIDMA). The second author would like to express his gratitude
to Professor Udri\c ste, for all the hospitality during
\emph{The International Conference of Differential Geometry
and Dynamical Systems} (DGDS-2010), held 25-28 August 2010
at the University Politehnica of Bucharest,
and for several stimulating books.



\end{document}